\newtheorem{theorem}{\sc Theorem}[section]
\newtheorem{lemma}[theorem]{\sc Lemma}
\newtheorem{proposition}[theorem]{\sc Proposition}
\newtheorem{corollary}[theorem]{\sc Corollary}
\begin{document}

\author{Pavel Shumyatsky}
\address{Department of Mathematics, University of Brasilia, 70910-900 Bras\'ilia DF, Brazil}
\email{pavel@unb.br}

\author{Danilo San\c c\~ao da Silveira}
\address{Department of Mathematics, University of Brasilia, 70910-900 Bras\'ilia DF, Brazil}
\email{sancaodanilo@ufg.br}

\keywords{Finite groups, Automorphisms, Centralizers, Engel elements}
\subjclass{20D45, 20F40, 20F45}

\thanks{This work was supported by CNPq and FAPDF, Brazil. }

\dedicatory{For Evgeny Khukhro on his 60th birthday}
\title[On groups with automorphisms]{On finite groups with automorphisms\\ whose fixed points are Engel}

\begin{abstract} The main result of the paper is the following theorem. Let $q$ be a prime, $n$ a positive integer and $A$ an elementary abelian group of order $q^2$. Suppose that $A$ acts coprimely on a finite group $G$ and assume that for each $a\in A^{\#}$ every element of $C_{G}(a)$ is $n$-Engel in $G$. Then the group $G$ is $k$-Engel for some $\{n,q\}$-bounded number $k$.
\end{abstract}

\maketitle

\section{Introduction}
Let $A$ be a  finite group acting on a finite group $G$. Many well-known results show that the structure of the centralizer $C_G(A)$ (the fixed-point subgroup) of $A$ has influence over the structure of $G$. The influence is especially strong if $(|A|,|G|)=1$, that is, the action of $A$ on $G$ is coprime. We will give examples illustrating this phenomenon.

Thompson proved that if $A$ is of prime order and $C_G(A)=1$, then $G$ is nilpotent \cite{T}. Higman proved that any nilpotent
group admitting a fixed-point-free automorphism of prime order $q$ has nilpotency class bounded by some function $h(q)$ depending on $q$ alone \cite{Higman}. The reader can find in \cite{Khu1} and \cite{Khu2} an account on the more recent developments related to these results. The next result is a consequence of the classification of finite simple groups \cite{Wang}: If $A$ is a group of automorphisms of $G$ whose order is coprime to that of $G$ and $C_G(A)$ is nilpotent or has odd order, then $G$ is soluble. Once the group $G$ is known to be soluble, there is a wealth of results bounding the Fitting height of $G$ in terms of the order of $A$ and the Fitting height of $C_G(A)$. This direction of research was started by Thompson in \cite{thompson2}. The proofs mostly use representation theory in the spirit of the Hall-Higman work \cite{HH}. A general discussion of these methods and their use in numerous fixed-point theorems can be found in Turull \cite{Tu}.

Following the solution of the restricted Burnside problem it was discovered that the exponent of $C_G(A)$ may have strong impact over the exponent of $G$. Remind that a group $G$ is said to have exponent $n$ if $x^n=1$ for every $x\in G$ and $n$ is the minimal positive integer with this property. The next theorem was obtained in \cite{KS}.

\begin{theorem}\label{q2}
Let $q$ be a prime, $n$ a positive integer and $A$ an elementary abelian group of order $q^2$. Suppose that $A$ acts coprimely on a finite group $G$ and assume that $C_{G}(a)$ has exponent dividing $n$ for each $a\in A^{\#}$. Then the exponent of $G$ is $\{n,q\}$-bounded.
\end{theorem}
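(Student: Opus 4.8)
The plan is to reduce the problem to $p$-groups and then to apply Lie-ring methods based on Zelmanov's solution of the restricted Burnside problem. First I would reduce to the case where $G$ is a $p$-group. For each prime $p$ dividing $|G|$ choose an $A$-invariant Sylow $p$-subgroup $P$, which exists by coprimeness. Since $C_P(a)=P\cap C_G(a)$ for every $a\in A^{\#}$, the group $P$ inherits the hypothesis that $C_P(a)$ has exponent dividing $n$. The point of this reduction is that the $p$-part of the exponent of $G$ equals $\exp(P)$: every $g\in G$ has a primary decomposition, and the order of its $p$-component is at most $\exp(P)$. Hence if we can bound $\exp(P)$ by an $\{n,q\}$-bounded number $N$ uniformly in $p$, then only primes $p\le N$ can divide $\exp(G)$, each contributing a factor at most $N$, so $\exp(G)\le N^{\pi(N)}$ is $\{n,q\}$-bounded. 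Thus it suffices to bound the exponent of an $A$-invariant $p$-group $P$ with $p\neq q$ subject to the same hypothesis, and no solubility information about $G$ is required.

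Next I would localize to a boundedly generated subgroup. Fix $x\in P$ and set $H=\langle x^{A}\rangle$, the $A$-invariant subgroup generated by the orbit of $x$; it is generated by at most $q^{2}$ elements. Writing $A_{1},\dots,A_{q+1}$ for the subgroups of order $q$ in $A$, the standard coprime generation lemma gives $H=\langle C_H(A_1),\dots,C_H(A_{q+1})\rangle$, and each $C_H(A_i)\le C_G(a)$ for $a\in A_i$, hence has exponent dividing $n$. It is then enough to bound $\exp(H)$ by an $\{n,q\}$-bounded number, since this bounds $|x|$ and $x$ was arbitrary.

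The heart of the argument is Lie-theoretic. I would pass to the associated Lie algebra $L=L_p(H)$ over the field $\mathbf{F}_p$, on which $A$ acts coprimely, extending the ground field if necessary so that $L$ decomposes into weight spaces indexed by the characters of $A$. Since $L$ is generated by its degree-one component $H/\Phi(H)$, it is generated by at most $q^{2}$ elements, and we may choose a weight basis of $H/\Phi(H)$. Each such basis vector is the image of an element lying in some $C_H(A_i)$, or in $C_H(A)$, because a nontrivial character has kernel one of the $A_i$ and coprime action identifies fixed points in the quotient; as every element of $C_H(a)$ has $p$-power order dividing $p^{e}$, the $p$-part of $n$, the restricted-Lie-algebra identity $(\mathrm{ad}\,\bar y)^{p^{e}}=\mathrm{ad}(\bar y^{[p^{e}]})=0$ shows each chosen generator is ad-nilpotent of index at most $p^{e}$. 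Moreover every commutator in these generators is again weight-homogeneous, hence lies in $C_L(A_j)=L_p(C_H(A_j))$ for a suitable $j$ or in $C_L(A)$, and is therefore ad-nilpotent of index at most $p^{e}\le n$. Finally $C_L(A)$ satisfies a polynomial identity, this being the Lie-ring reflection of $C_H(A)$ having exponent dividing $p^{e}$, so the coprime-action transfer theorem of Bahturin--Zaitsev/Linchenko yields a polynomial identity for $L$ of $q$-bounded degree.

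At this stage Zelmanov's theorem applies: a Lie algebra satisfying a polynomial identity and generated by finitely many elements all of whose commutators are ad-nilpotent of bounded index is nilpotent of bounded class. Here the number of generators ($\le q^{2}$), the ad-nilpotency index ($\le n$) and the PI-degree (bounded by $q$) are all $\{n,q\}$-bounded, so $L$, and hence $H$, is nilpotent of $\{n,q\}$-bounded class $c$. Since $H$ has class at most $c$ and is generated by the subgroups $C_H(A_i)$ of exponent dividing $n$, the standard lemma that a nilpotent group of class $c$ generated by elements of order dividing $m$ has $\{c,m\}$-bounded exponent yields that $\exp(H)$ is $\{n,q\}$-bounded, which completes the proof. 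The hard part will be the Lie-theoretic core of the previous paragraph: one must simultaneously use the coprime grading to produce a bounded generating set consisting of ad-nilpotent weight elements and verify the polynomial-identity hypothesis, since it is exactly the interplay of these two facts that allows Zelmanov's nilpotency criterion to convert the fixed-point exponent bound into a class bound for $H$.
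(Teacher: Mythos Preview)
Your reduction to a boundedly generated $p$-group $H$ and the Lie-theoretic analysis of $L=L_p(H)$ are sound and mirror what the paper sketches. The gap is the final inference ``so $L$, and hence $H$, is nilpotent of $\{n,q\}$-bounded class $c$'': nilpotency of $L_p(H)$ does \emph{not} bound the nilpotency class of $H$. For an odd prime $p$ and any $k\ge 1$ take $G_k=\langle x,t\mid x^{p^k}=t^p=1,\ x^t=x^{1+p}\rangle$. One checks $\gamma_i(G_k)=\langle x^{p^{\,i-1}}\rangle$, so $G_k$ has class exactly $k$, whereas $G_k^p=[G_k,G_k]=\langle x^p\rangle$ forces $D_2(G_k)=D_3(G_k)=\langle x^p\rangle$, so that $L_p(G_k)=D_1/D_2$ is a two-dimensional abelian Lie algebra for every $k$. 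Thus a uniform bound on the class of $L_p(H)$ yields no bound whatsoever on the class of $H$, and your closing step (``class $c$ and generators of bounded order imply bounded exponent'') never gets off the ground.

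The paper does not reprove Theorem~\ref{q2} but explicitly names its ingredients, and the one you are missing is the Lubotzky--Mann/Lazard machinery of powerful $p$-groups; the proof of Theorem~\ref{main} in Section~4 shows exactly how it enters. What nilpotency of $L_p(H)$ actually delivers (via Proposition~1 of \cite{KS}) is a characteristic powerful subgroup $P\le H$ of $\{n,q\}$-bounded index. In a powerful $p$-group one has $P^{p^i}=\langle g_1^{p^i},\dots,g_d^{p^i}\rangle$ for any generating set $g_1,\dots,g_d$; choosing the $g_j$ inside the centralizers $C_P(A_i)$ via the Burnside Basis Theorem makes each $g_j$ of order dividing the $p$-part of $n$, so the exponent of $P$ divides $n$. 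Since $[H:P]$ is bounded, the exponent of $H$ is then $\{n,q\}$-bounded as well. This powerful-subgroup bridge is the essential step your outline omits.
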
  
Here and throughout the paper $A^{\#}$ denotes the set of nontrivial elements of $A$. The proof of  the above result involves a number of deep ideas. In
particular, Zelmanov's techniques that led to the solution of the restricted Burnside problem \cite{Z0,Z1} are combined with the Lubotzky--Mann theory of powerful $p$-groups \cite{luma}, Lazard's criterion for a pro-$p$ group to be $p$-adic analytic \cite{L}, and a theorem of Bakhturin and Zaicev on Lie algebras admitting a group of automorphisms whose fixed-point subalgebra is PI \cite{BZ}.

Let $n$ be a positive integer and let $x,y$ be elements of a group $G$. The commutators $[x,_n y]$ are defined inductively by the rule
$$[x,_0 y]=x,\quad [x,_n y]=[[x,_{n-1} y],y].$$
An element $x$ is called a (left) $n$-Engel element if for any $g\in G$ we have $[g,_n x]=1$. A group $G$ is called $n$-Engel if all elements of $G$ are $n$-Engel. The main result of the present article is the following theorem.

\begin{theorem}\label{main} Let $q$ be a prime, $n$ a positive integer and $A$ an elementary abelian group of order $q^2$. Suppose that $A$ acts coprimely on a finite group $G$ and assume that for each $a\in A^{\#}$ every element of $C_{G}(a)$ is $n$-Engel in $G$. Then the group $G$ is $k$-Engel for some $\{n,q\}$-bounded number $k$.
\end{theorem}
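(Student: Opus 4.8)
The plan is to reduce the statement, via soft group-theoretic arguments, to a bounded-nilpotency question for Lie algebras, and then to run the Lie-theoretic machinery already employed in the proof of Theorem~\ref{q2}.

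\emph{Reductions.} To prove that $G$ is $k$-Engel for a suitable $\{n,q\}$-bounded $k$ it suffices to show that $[g,{}_{k}x]=1$ for all $g,x\in G$; fixing $g$ and $x$ we may replace $G$ by the $A$-invariant subgroup $\langle g^{\alpha},x^{\alpha}:\alpha\in A\rangle$, which is generated by at most $2q^{2}$ elements and inherits the hypotheses, so we may assume that $G$ is generated by a $q$-bounded number of elements. Moreover each $C_{G}(a)$ is an $n$-Engel group, so every element of $C_{G}(a)$ is a bounded left Engel element of the finite group $G$ and hence lies in $F(G)$ by Baer's theorem; since $A$ is a non-cyclic abelian group acting coprimely, $G=\langle C_{G}(a):a\in A^{\#}\rangle\le F(G)$, so $G$ is nilpotent. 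This is the point at which the Engel hypothesis is decisively more convenient than the exponent hypothesis of Theorem~\ref{q2}, which does not force nilpotency and for which one first has to bound the Fitting height. Being nilpotent, $G$ is the direct product of its $A$-invariant Sylow subgroups, each of them $q$-boundedly generated and inheriting the hypotheses, and the $k$-Engel law holds in $G$ once it holds in each Sylow subgroup; so we may assume that $G$ is a finite $p$-group with $p\ne q$, generated by at most $2q^{2}$ elements.

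\emph{Passage to a Lie algebra.} Let $L$ be the restricted Lie algebra over the field of $p$ elements associated with the Zassenhaus--Jennings--Lazard filtration $G=D_{1}\ge D_{2}\ge\cdots$ of $G$; then $A$ acts coprimely on $L$, and $L$ is generated by its first homogeneous component, which has dimension at most $2q^{2}$. After a finite extension of the ground field so that it contains the $q$-th roots of unity we obtain the weight decomposition $L=\bigoplus_{\chi}L_{\chi}$, in which $[L_{\chi},L_{\psi}]\subseteq L_{\chi\psi}$ and each $L_{\chi}$ is contained in $C_{L}(a)$ for some $a\in A^{\#}$. The crucial bridge with the group is that the image in $L$ of any element of $C_{G}(a)$ that is $n$-Engel in $G$ is ad-nilpotent of index at most $n$: if $x$ is such an element then $[g,{}_{n}x]=1$ forces $(\mathrm{ad}\,\bar{x})^{n}$ to annihilate every homogeneous component of $L$, because $[D_{i},D_{j}]\subseteq D_{i+j}$. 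Using the coprimeness of the action one deduces that every element of every weight space is ad-nilpotent of index at most $n$; and since any commutator in weight-homogeneous elements is again weight-homogeneous and therefore lies in a single weight space, all commutators in a set of weight-homogeneous elements are ad-nilpotent of index at most $n$. In particular $L$ admits a generating set of $q$-bounded cardinality, namely the weight components of a basis of its first homogeneous component, all of whose commutators are ad-nilpotent of index at most $n$.

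\emph{Identities, nilpotency, and the return to $G$.} The fixed-point subalgebra $C_{L}(A)=\bigcap_{a\in A^{\#}}C_{L}(a)$ satisfies a polynomial identity of $n$-bounded degree --- it is intimately related to the Lie algebra associated with the $n$-Engel, hence nilpotent, group $C_{G}(A)$, and all of its weight-homogeneous elements are ad-nilpotent of index at most $n$ --- so, by the Bakhturin--Zaicev theorem~\cite{BZ} (applicable since $p$ does not divide $|A|$), $L$ itself satisfies a polynomial identity of $\{n,q\}$-bounded degree $d$. Zelmanov's theorem~\cite{Z0,Z1} now applies: a Lie algebra that satisfies a polynomial identity and is generated by finitely many elements all of whose commutators are ad-nilpotent of index at most $n$ is nilpotent of class bounded in terms of the number of generators, $n$, and $d$; as each of these quantities is $\{n,q\}$-bounded in our case, $L$ is nilpotent of $\{n,q\}$-bounded class. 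Finally one has to turn this into a bound on the Engel degree of $G$ itself, which --- the nilpotency class of a $p$-group not being directly controlled by that of its associated Lie algebra --- is carried out as in the proof of Theorem~\ref{q2}, through the Lubotzky--Mann theory of powerful $p$-groups~\cite{luma} and Lazard's criterion for $p$-adic analyticity~\cite{L}. I expect the main difficulty to lie exactly here: combining Zelmanov's and Bakhturin--Zaicev's theorems with the powerful $p$-group machinery while keeping every constant dependent on $n$ and $q$ alone, together with the verification that the fixed-point subalgebra satisfies an identity of bounded degree --- the same kind of quantitative bookkeeping as in \cite{KS}.
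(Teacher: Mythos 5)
There is a genuine gap, and it sits exactly where you yourself flag the ``main difficulty'': the return from the Lie algebra to the group. Your argument produces (modulo the point below) that the Zassenhaus--Jennings--Lazard algebra $L_p(G)$ is nilpotent of $\{n,q\}$-bounded class, and you then assert that the Engel bound for $G$ follows ``as in the proof of Theorem~\ref{q2}'' via powerful $p$-groups and Lazard's criterion. But the exponent argument of \cite{KS} does not transfer: bounded class of $L_p(G)$ only yields a characteristic powerful subgroup of bounded index, and in the exponent problem one then wins because a powerful $p$-group generated by elements of order dividing $n$ has exponent dividing $n$. There is no analogous statement for nilpotency class or Engel degree --- a powerful $p$-group on boundedly many generators can have arbitrarily large class and derived length --- so knowing that $L_p(G)$ has bounded class does not bound anything about $\langle x,y\rangle$. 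This is precisely why the paper cannot stay over the field $\mathbb{F}_p$: it works with the integral Lie ring $L(G)$ of the lower central series and proves a Lie-\emph{ring} analogue of the Zelmanov/Bakhturin--Zaicev combination (Proposition~\ref{ZelBazaRing}), giving $e\gamma_c(L(G))=0$ with $e,c$ bounded (Proposition~\ref{pppp}(2)). For powerful $G$ one applies this to $R=G^e$ and the Lubotzky--Mann power-commutator inequalities to force $\gamma_c(R)^e=1$, hence $|\gamma_c(R)|\le e^{2q^2}$ and a bounded derived length for $G$; the field-theoretic statement (Proposition~\ref{pppp}(1)) is used only to produce the powerful subgroup of bounded index in the general case. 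Finally, bounded derived length is converted into bounded class by a quantitative form of Gruenberg's theorem (Lemma~\ref{gru}), using that $G$ is generated by at most $2q^2$ elements that are $n$-Engel. None of these ingredients (integral Lie ring, the torsion statement $e\gamma_c=0$, the derived-length computation, quantitative Gruenberg) is present in, or replaceable by, your appeal to Lazard's criterion, so as written the proof does not close.

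A smaller but real inaccuracy: you claim every element of every weight space is ad-nilpotent of index at most $n$. What the group hypothesis gives is ad-nilpotency of index $n$ for the \emph{homogeneous images of group elements} of the $C_G(a)$; an element of a weight space after extending scalars is a combination $x_0+\omega x_1+\cdots+\omega^{q-2}x_{q-2}$ of such elements, and its ad-nilpotency of merely $\{n,q\}$-bounded index has to be proved, as in Claim~(\ref{Lijbarra}) of Proposition~\ref{pppp}, by showing that the subring generated by the summands is nilpotent of bounded class (Theorem~\ref{Znilpotenteanel}, which again needs the Lie-ring machinery) and then invoking Lemma~\ref{lemmapskh}. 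Your reductions before the Lie-theoretic part (Baer's theorem, passage to an $A$-invariant $2q^2$-generated subgroup and to a Sylow $p$-subgroup) do match the paper.
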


The methods employed in the proof of Theorem \ref{main} are roughly the same as those in the proof of Theorem \ref{q2}. There are however some rather significant differences. The main difficulty is that Theorem \ref{main} does not reduce as quickly as Theorem \ref{q2} to questions about Lie algebras over field. Instead, we have to work with Lie rings. This hurdle is overcome via Proposition \ref{ZelBazaRing} obtained in the next section and some other tools. 

Throughout the paper we use without special references the well-known properties of coprime actions:  

If $\alpha$ is an automorphism of a finite group $G$ of coprime order, $(|\alpha|,|G|)=1$, then $ C_{G/N}(\alpha)=C_G(\alpha)N/N$ for any $\alpha$-invariant normal subgroup $N$. 

If $A$ is a noncyclic abelian group acting coprimely on a finite group $G$, then $G$ is generated by the subgroups $C_G(B)$, where $A/B$ is cyclic.

We use the expression ``$\{a,b,\dots\}$-bounded'' to abbreviate ``bounded from above in terms of  $a, b,\dots$ only''.

\section{About Lie Rings and Lie Algebras}

Let $X$ be a subset of a Lie algebra $L$. By a commutator in elements of $X$ we mean any element of $L$ that can be obtained as a Lie product of elements of $X$ with some system of brackets. If $x_1,\ldots,x_k,x, y$ are elements of $L$, we define inductively 
$$[x_1]=x_1; [x_1,\ldots,x_k]=[[x_1,\ldots,x_{k-1}],x_k]$$
and 
$[x,_0y]=x; [x,_my]=[[x,_{m-1}y],y],$ for all positve integers $k,m$.  
As usual, we say that an element $a\in L$ is ad-nilpotent if there exists a positive integer $n$ such that $[x,_na]=0$ for all $x\in L$. If $n$ is the least integer with the above property then we say that $a$ is ad-nilpotent of index $n$. 

The next theorem is a deep result of Zelmanov \cite{Z1}.

\begin{theorem}\label{Z1992}
Let $L$ be a Lie algebra over a field and suppose that $L$ satisfies a polynomial identity. If $L$ can be generated by a finite set $X$ such that every commutator in elements of $X$ is ad-nilpotent, then $L$ is nilpotent.
\end{theorem}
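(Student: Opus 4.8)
The plan is to follow Zelmanov's strategy, which is built on the theory of sandwich subalgebras together with the structure theory of nondegenerate PI Lie algebras. Since nilpotency, the polynomial identity, and the ad-nilpotency of the generating commutators are all preserved under extension of the ground field, I would first pass to the algebraic closure and assume the base field $k$ is algebraically closed (and infinite). Because $L$ is generated by the finite set $X$, it suffices to prove that $L$ is \emph{locally nilpotent}: a finitely generated locally nilpotent Lie algebra is a finitely generated subalgebra of itself and is therefore nilpotent. Thus the goal becomes showing that every finitely generated subalgebra is nilpotent, and — since each $x\in X$ is itself a length-one commutator and hence ad-nilpotent — the working hypothesis throughout is that of a finitely generated PI Lie algebra whose generating commutators are all ad-nilpotent.

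The engine of the argument is Kostrikin--Zelmanov sandwich theory. Recall that $c\in L$ is a \emph{sandwich} (an absolute zero divisor) if $(\mathrm{ad}\,c)^2=0$ and $\mathrm{ad}(c)\,\mathrm{ad}(x)\,\mathrm{ad}(c)=0$ for all $x\in L$, and that Zelmanov's sandwich theorem asserts that a Lie algebra generated by finitely many sandwiches is nilpotent, with class bounded by the number of generators. The first main step is to manufacture sandwiches from the ad-nilpotent generators: using the identity together with nilpotency of the relevant adjoint operators, one shows — following Kostrikin's index reduction and Zelmanov's refinements via linearization — that in the absence of nonzero sandwiches the ad-nilpotency index of the generators can be forced down, the extreme case being controlled directly by the polynomial identity. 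I would then let $J$ denote the sum of all locally nilpotent ideals (the locally nilpotent radical). The sandwich theorem guarantees that every sandwich of $L$ generates a locally nilpotent ideal and so lies in $J$; hence the quotient $\bar L=L/J$ is \emph{nondegenerate}, i.e. it contains no absolute zero divisors, while still being PI and still generated by ad-nilpotent images of the commutators in $X$.

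The second main step is to analyze $\bar L$ through Zelmanov's structure theory of PI Lie algebras. After reducing to the prime case (by passing to an appropriate prime quotient, or localizing at the centroid), a nondegenerate prime PI Lie algebra is severely constrained: the classification places it inside a finite-dimensional simple Lie algebra over a field extension of its centroid, or inside a well-understood special algebra attached to an associative PI ring. In such an algebra the nilpotency of the adjoint operators of the generators, combined with the generation condition, forces finite dimensionality; the ad-nilpotent generators then make this finite-dimensional algebra nilpotent, and a nondegenerate nilpotent Lie algebra is zero. Consequently $\bar L=0$, so $L=J$ is locally nilpotent, and by the first reduction $L$ is nilpotent. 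Tracking the bounds through the sandwich theorem and the structure theory yields a class bounded in terms of $|X|$, the degree of the identity, and finitely many of the ad-nilpotency indices.

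The main obstacle, and precisely where the depth of Zelmanov's work resides, is the passage between ad-nilpotency and sandwiches (the linearization and index reduction) and the structure theory governing the nondegenerate quotient $\bar L$. The local nilpotency of sandwich algebras and the classification of strongly prime PI Lie (super)algebras are the two hardest external inputs; once they are in place, the radical argument and the final descent to nilpotency are comparatively formal. I would therefore treat these two results as the principal lemmas to be established (or invoked from Zelmanov's papers \cite{Z0,Z1}), since reconstructing them from first principles is essentially the content of the solution of the restricted Burnside problem.
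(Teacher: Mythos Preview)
The paper does not prove this theorem at all: it is quoted verbatim as ``a deep result of Zelmanov \cite{Z1}'' and used as a black box throughout. There is therefore no proof in the paper to compare your proposal against.

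That said, your outline is a fair high-level description of the architecture of Zelmanov's argument --- reduction to local nilpotency, production of sandwiches via index reduction and linearization, passage to the nondegenerate quotient modulo the locally nilpotent radical, and the structure theory of nondegenerate prime PI Lie algebras to kill that quotient. You are also honest about where the real difficulty lies: the sandwich theorem and the structure/classification step are precisely the deep inputs, and you correctly flag that reproducing them is tantamount to reproving the core of the restricted Burnside solution. As a sketch this is reasonable; as a proof it is not self-contained, but neither does the paper pretend to give one. For the purposes of this paper the appropriate move is exactly what the authors do: cite \cite{Z1} and move on.
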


An important criterion for a Lie algebra to satisfy a polynomial identity is provided by the next theorem. It was proved by Bakhturin and Zaicev for soluble groups $A$ \cite{BZ} and later extended by Linchenko to the general case \cite{l}. 
 
\begin{theorem}\label{blz}
Let $L$ be a Lie algebra over a field $K$. Assume that a finite group $A$ acts on $L$ by automorphisms in such a manner that $C_L(A)$ satisfies a polynomial identity. Assume further that the characteristic of $K$ is either $0$ or prime to the order of $A$. Then $L$ satisfies a polynomial identity.
\end{theorem}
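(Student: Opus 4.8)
The plan is to show that $L$ satisfies a polynomial identity by reducing, in two stages, to the case where the group $A$ is abelian and the field $K$ is algebraically closed, and then to exploit the eigenspace grading forced by a coprime, hence semisimple, action. First I would enlarge the ground field to its algebraic closure $\bar K$. A Lie algebra satisfies a polynomial identity if and only if it satisfies a multilinear one, multilinear identities pass up and down along field extensions, and $C_{L\otimes_K\bar K}(A)=C_L(A)\otimes_K\bar K$; so it suffices to treat $L\otimes_K\bar K$ over $\bar K$, and any multilinear identity found there descends to $L$ by spreading its $\bar K$-coefficients over a $K$-basis of the finite subextension that contains them, which yields a nonzero identity with coefficients in $K$. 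Thus I may assume $K=\bar K$, so that $K$ contains all $|A|$-th roots of unity.

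Next I would run an induction on $|A|$ to reduce to the case that $A$ is abelian. If $A=1$ there is nothing to prove. If $A$ has a proper nontrivial normal subgroup $N$, then $A/N$ acts on $C_L(N)$ and $C_{C_L(N)}(A/N)=C_L(A)$, which satisfies a polynomial identity; since $\operatorname{char}K$ is prime to $|A/N|$ and $|A/N|<|A|$, induction shows that $C_L(N)$ satisfies a polynomial identity, and then the action of $N$ on $L$, with $|N|<|A|$, gives a polynomial identity for $L$ by induction again. When $A$ is soluble every composition factor is cyclic of prime order, so this process terminates at the abelian case; this is precisely why the soluble version of the theorem is the accessible one.

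For abelian $A$ the coprimality hypothesis makes the group algebra $KA$ semisimple and split, so $L$ decomposes into eigenspaces $L=\bigoplus_{\chi\in\hat A}L_\chi$, where $L_\chi=\{x\in L:\ a\cdot x=\chi(a)x\ \forall a\in A\}$. Because $A$ acts by Lie automorphisms we have $[L_\chi,L_\psi]\subseteq L_{\chi\psi}$, so this is a grading of $L$ by the finite abelian group $\hat A$, and its identity component $L_{\mathbf 1}$ is exactly $C_L(A)$, which satisfies a multilinear identity $g$ of some degree $d$. The core is then a graded argument: in any long commutator of homogeneous elements the partial degrees range over $\hat A$, so once the length exceeds $|A|$ a pigeonhole argument locates a consecutive sub-commutator of trivial total degree, that is, one lying in $L_{\mathbf 1}=C_L(A)$. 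Substituting $g$ into such a sub-commutator produces a nontrivial multilinear graded identity for $L$ whose length is bounded in terms of $d$ and $|A|$.

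The remaining, and hardest, step is to convert a graded identity into an ordinary one, and, in the non-soluble case, to deal with nonabelian simple composition factors where no eigenspace grading is available. For the graded-to-ordinary passage I would pass to the universal enveloping algebra, which inherits the $\hat A$-grading, reduce to the corresponding statement for finite-group-graded associative algebras, and then linearize over the grading so that the homogeneity constraints are absorbed into a single identity of bounded degree. The treatment of nonabelian simple factors is exactly the obstacle separating the soluble case from the general one: there the clean splitting $L=\bigoplus L_\chi$ collapses, and one must instead argue through the module structure of $L$ over the noncommutative but still semisimple algebra $KA$, together with more delicate combinatorics on its isotypic components. I expect this last point to be the principal technical difficulty of the whole argument.
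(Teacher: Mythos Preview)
The paper does not prove Theorem~\ref{blz}; it is quoted as a known result, attributed to Bakhturin--Zaicev \cite{BZ} for soluble $A$ and to Linchenko \cite{l} in general. So there is no ``paper's own proof'' to compare your attempt against.

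Your sketch is, in outline, the Bakhturin--Zaicev argument: extend scalars to an algebraically closed field, reduce by induction on $|A|$ via a proper normal subgroup to the simple case, and for cyclic (hence abelian) $A$ use the character grading $L=\bigoplus_{\chi}L_\chi$ together with a pigeonhole count on partial degrees to force long commutators to pass through $L_{\mathbf 1}=C_L(A)$. Two points need more than you give them. First, a left-normed Lie commutator has no literal ``consecutive sub-commutator''; the pigeonhole step is carried out for associative monomials (in the enveloping algebra or in a free graded associative algebra) and then transported back to the Lie setting. Second, the passage from a \emph{graded} identity to an ordinary one is not a formality but a separate theorem on group-graded PI algebras, and this is where most of the effort in \cite{BZ} actually goes; your one-sentence plan for it would not suffice.

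You correctly isolate the nonabelian simple case as the obstruction, but your proposed attack via isotypic components and combinatorics over the noncommutative semisimple algebra $KA$ is not how the general case is handled. Linchenko's proof \cite{l} treats $L$ as a module Lie algebra over the Hopf algebra $KA$ (indeed over an arbitrary finite-dimensional semisimple Hopf algebra) and develops a Hopf-algebra analogue of the graded-PI machinery; the argument is structural rather than a refinement of the eigenspace combinatorics. So the final paragraph points in the wrong technical direction, even though it locates the difficulty accurately.
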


Both Theorem \ref{Z1992} and Theorem \ref{blz} admit respective quantitative versions (see for example \cite{aaaa}).

\begin{theorem}\label{Z1}
Let $L$ be a Lie algebra over a field $K$ generated by $a_1,\ldots,a_m$. Suppose that $L$ satisfies a polynomial identity $f\equiv 0$ and each commutator in $a_1,\ldots,a_m$ is ad-nilpotent of index at most $n$. Then $L$ is nilpotent of $\{f,K,m,n\}$-bounded class.
\end{theorem}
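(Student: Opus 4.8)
The plan is to deduce this quantitative statement from the qualitative Theorem \ref{Z1992} by passing to a single universal (relatively free) Lie algebra that dominates every algebra $L$ permitted by the hypotheses. The point is that one cannot realistically track explicit constants through Zelmanov's proof; instead one exhibits one concrete algebra whose nilpotency class, supplied by Theorem \ref{Z1992}, serves as the required bound. Write $f = f(y_1,\ldots,y_r)$ for the Lie polynomial in question.

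Concretely, let $F$ be the free Lie algebra over $K$ on free generators $x_1,\ldots,x_m$, and let $\mathcal{C}$ denote the (countable) set of all commutators in $x_1,\ldots,x_m$. I would form the ideal $I$ of $F$ generated by the two families
\[
\{\, f(u_1,\ldots,u_r) : u_1,\ldots,u_r \in F \,\} \quad\text{and}\quad \{\, [z,_n c] : z \in F,\ c \in \mathcal{C} \,\},
\]
and set $M = F/I$. By construction $M$ is generated by the $m$ elements $\bar x_1,\ldots,\bar x_m$. Since every $f$-value lies in $I$ and every element of $M$ is the image of an element of $F$, the identity $f\equiv 0$ holds throughout $M$; similarly, for each commutator $c$ in the $\bar x_i$ we have $[\bar z,_n c]=0$ for every $\bar z\in M$, so each such commutator is ad-nilpotent of index at most $n$. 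Thus $M$ meets all the hypotheses of Theorem \ref{Z1992} with the finite generating set $\{\bar x_1,\ldots,\bar x_m\}$, and that theorem yields that $M$ is nilpotent of some class $c_0$. Crucially, $M$ depends only on $f$, $K$, $m$ and $n$, hence so does $c_0$.

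It remains to observe that every $L$ as in the statement is a homomorphic image of $M$. Indeed, the assignment $x_i\mapsto a_i$ extends to a homomorphism $F\to L$; its kernel contains every $f$-value because $L$ satisfies $f\equiv 0$, and contains every $[z,_n c]$ because the image of $c$ is a commutator in $a_1,\ldots,a_m$ and is therefore ad-nilpotent of index at most $n$ in $L$. Hence the kernel contains $I$, the homomorphism factors through $M$, and $L\cong M/J$ for some ideal $J$. Nilpotency class does not increase under passage to quotients, since $\gamma_{c_0+1}(M/J)$ is the image of $\gamma_{c_0+1}(M)=0$, so $L$ is nilpotent of class at most $c_0$, which is the required $\{f,K,m,n\}$-bounded number. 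The only delicate point is the verification that $M$ genuinely inherits both hypotheses of Theorem \ref{Z1992}, and in particular that imposing $[z,_n c]\in I$ for all $z\in F$ really forces ad-nilpotency of index at most $n$ in the quotient $M$ rather than merely in $F$; this, however, follows at once from the surjectivity of $F\to M$, so the argument carries no computational obstacle beyond setting up the universal object correctly.
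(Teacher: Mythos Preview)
Your argument is correct: the relatively free object $M=F/I$ does inherit the hypotheses of Theorem~\ref{Z1992}, its nilpotency class $c_0$ depends only on the data $f,K,m,n$, and every admissible $L$ is a quotient of $M$. Note, however, that the paper does not supply its own proof of Theorem~\ref{Z1}; it is quoted from \cite{aaaa} as a known quantitative version of Theorem~\ref{Z1992}. There is therefore nothing in the paper to compare your proof against directly. That said, the technique you use is exactly the one the paper deploys in its proof of Proposition~\ref{ZelBazaRing}: there too one passes to the free Lie algebra on $m$ generators, quotients by the ideal generated by the $f$-values on the fixed-point subalgebra together with the elements $[u,{}_nv]$ for $v$ a commutator in the generators, invokes the qualitative nilpotency result on this universal object, and then reads off a bound valid for every instance. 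So your approach is the standard one and is in full agreement with the method used elsewhere in the paper.
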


\begin{theorem}\label{bZ1}
 Let $L$ be as in Theorem \ref{blz} and assume that $C_L(A)$ satisfies a polynomial identity $f\equiv 0$. Then $L$ satisfies a polynomial identity of $\{|A|,f,K\}$-bounded degree.
\end{theorem}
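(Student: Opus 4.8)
The plan is to keep track of constants throughout the qualitative proof of Theorem \ref{blz}, reducing the whole problem to a combinatorial statement about group-graded Lie algebras and then its fusion-graded analogue. I would begin with two standard reductions. First, replace $f$ by a multilinear identity: if a Lie algebra satisfies a nonzero identity of degree $d$ it satisfies a nonzero \emph{multilinear} identity of degree at most $d$, so we may assume $f$ is multilinear of degree $d=\deg f$. Second, extend scalars to the algebraic closure $\bar K$. A multilinear identity (whose coefficients lie in the prime field) holds in a $K$-algebra if and only if it holds after the flat base change $L\otimes_K\bar K$, and taking $A$-fixed points commutes with this base change, so $C_{L\otimes\bar K}(A)=C_L(A)\otimes\bar K$ still satisfies $f$. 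Hence we may assume $K=\bar K$. In particular $K$ contains a primitive $|A|$-th root of unity, and since the characteristic of $K$ is $0$ or prime to $|A|$, the group algebra $KA$ is semisimple by Maschke's theorem.

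The heart of the argument is the abelian case, which already suffices for Theorem \ref{main} since there $A$ is elementary abelian. When $A$ is abelian, semisimplicity together with the presence of roots of unity forces simultaneous diagonalization of the commuting operators $a\in A$, giving the weight decomposition $L=\bigoplus_{\chi\in\hat A}L_\chi$, where $L_\chi=\{x\in L: a(x)=\chi(a)x \ \forall a\in A\}$. As $A$ acts by automorphisms we get $[L_\chi,L_\psi]\subseteq L_{\chi\psi}$, so this is a grading of $L$ by the finite group $\hat A$ of order $|A|$, whose neutral component is exactly $L_e=C_L(A)$. The problem thus becomes the following graded transfer statement: a Lie algebra graded by a finite group of order $m$ whose neutral component satisfies a multilinear identity of degree $d$ must itself satisfy an identity of $\{m,d\}$-bounded degree.

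I would establish this by a pigeonhole-plus-collection argument. Evaluate a long left-normed commutator $[y_1,\dots,y_N]$ on homogeneous elements $y_i\in L_{g_i}$; its value lies in $L_{g_1\cdots g_N}$. Among the $N+1$ partial products $e,g_1,g_1g_2,\dots,g_1\cdots g_N$ at least $\lceil(N+1)/m\rceil$ coincide once $N$ is large, so a long commutator necessarily contains many consecutive homogeneous blocks of neutral degree. Using the Jacobi identity to collect, one rewrites such a commutator as a linear combination of commutators in which a sub-commutator of length $d$ lying in $L_e$ appears in a position where the multilinear identity $f$ can be substituted; taking $N$ of $\{m,d\}$-bounded size large enough to produce $d$ independent neutral blocks forces $f$ to annihilate the whole expression, yielding a nonzero multilinear identity on $L$ of $\{m,d\}$-bounded degree. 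Since every element of $L$ is a sum of homogeneous components and the identity is multilinear, an identity holding on all homogeneous substitutions holds identically; with $m=|A|$ and $d=\deg f$ the resulting degree is $\{|A|,f,K\}$-bounded as required.

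Finally, the general, possibly nonabelian, case. Here there is no grading by a group; instead the semisimple decomposition of $L$ as a $KA$-module yields $L=\bigoplus_i L_i$ into isotypic components, at most $|A|$ in number, with the trivial-isotypic component equal to $C_L(A)$ and with brackets governed by the tensor-product fusion rules of $\mathrm{Irr}(A)$. The same pigeonhole-and-collection scheme applies to this ``fusion grading,'' the relevant bounds depending only on the number of irreducibles and the fusion multiplicities, hence only on $|A|$; this is the Hopf-algebraic content of Linchenko's extension of Bakhturin--Zaicev, made quantitative. I expect the main obstacle to be exactly this nonabelian step: without a genuine group grading the collection must be carried out relative to the fusion coefficients, and verifying that the combinatorics still terminate in $\{|A|,d\}$-bounded length, rather than merely finite length, is where the quantitative bookkeeping is most delicate.
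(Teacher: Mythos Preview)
The paper does not contain a proof of Theorem~\ref{bZ1}: it is stated as a known quantitative version of Theorem~\ref{blz} and simply cited (``see for example \cite{aaaa}''). There is therefore no proof in the paper against which to compare your proposal.

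As to the proposal itself: your strategy is the standard Bakhturin--Zaicev route, and the reductions (multilinearization, scalar extension, passage to the $\hat A$-grading in the abelian case) are correct. The sketch is, however, genuinely incomplete at the one step that carries all the weight. From the coincidence of partial products $g_1\cdots g_i=g_1\cdots g_j$ you obtain $g_{i+1}\cdots g_j=e$, but the left-normed commutator $[y_{i+1},\dots,y_j]$ is \emph{not} a sub-expression of $[y_1,\dots,y_N]$, and the initial segments $[y_1,\dots,y_i]$, $[y_1,\dots,y_j]$ both lie in $L_h$ rather than in $L_e$. Turning ``many partial products coincide'' into ``$d$ independent neutral elements to which $f$ can be applied, in a way that annihilates the whole commutator'' is exactly the combinatorial core of the Bakhturin--Zaicev argument, and it requires a specific construction (an alternating/symmetrizing polynomial built from $f$, together with a careful Jacobi rearrangement), not just pigeonhole plus the phrase ``collect using the Jacobi identity''. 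Your final paragraph correctly flags the nonabelian case as the most delicate, but the abelian step as written is also not yet a proof.
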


By combining the above results we obtain the following corollary. 

\begin{corollary}\label{ZelBazaField}
Let $L$ be a Lie algebra over a field $K$ and $A$ a finite group of automorphisms of $L$ such that $C_L(A)$ satisfies the polynomial identity $f\equiv 0$. Suppose that the characterisitic of $K$ is either $0$ or prime to the order of $A$. Assume that $L$ is generated by an $A$-invariant set of $m$ elements in which every commutator is ad-nilpotent of index at most $n$. Then $L$ is nilpotent of $\{|A|,f,K,m,n\}$-bounded class.
\end{corollary}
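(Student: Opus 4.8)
The statement combines the quantitative PI-criterion (Theorem \ref{bZ1}) with Zelmanov's quantitative nilpotency theorem (Theorem \ref{Z1}). So the proof should essentially be a two-step splice. First I would apply Theorem \ref{bZ1}: since $C_L(A)$ satisfies the identity $f\equiv 0$ and $\mathrm{char}\,K$ is $0$ or prime to $|A|$, the whole algebra $L$ satisfies some polynomial identity $g\equiv 0$ whose degree is $\{|A|,f,K\}$-bounded. (One should be mildly careful: Theorem \ref{bZ1} as stated gives a bound on the \emph{degree} of $g$; over an infinite field a bound on the degree pins down $g$ up to finitely many choices, and in any case the conclusion of Theorem \ref{Z1} that we invoke next depends on $g$ only through quantities that are themselves $\{|A|,f,K\}$-bounded — this is the kind of routine bookkeeping I would spell out in a sentence.)

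Second, I would feed this into Theorem \ref{Z1}. We are given that $L$ is generated by an $A$-invariant set $X$ of $m$ elements in which every commutator is ad-nilpotent of index at most $n$, and from Step 1 we know $L$ satisfies $g\equiv 0$. Theorem \ref{Z1} then yields that $L$ is nilpotent of class bounded in terms of $g$, $K$, $m$, and $n$. Unwinding the dependence of $g$ on $\{|A|,f,K\}$, the nilpotency class is $\{|A|,f,K,m,n\}$-bounded, which is exactly the assertion.

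The only genuine subtlety — and the step I would treat most carefully — is the hypothesis matching between the two theorems. Theorem \ref{Z1} requires that each commutator in the \emph{generators} $a_1,\dots,a_m$ be ad-nilpotent of bounded index; here the generating set $X$ is $A$-invariant and we are told every commutator in elements of $X$ is ad-nilpotent of index at most $n$, so taking $\{a_1,\dots,a_m\}=X$ this is satisfied verbatim. One should also note that Theorem \ref{Z1} as quoted does not literally need the group $A$ at all once the identity $g\equiv 0$ is in hand — $A$ has done its work in Step 1. So there is no circularity and no hidden obstacle; the corollary is a formal consequence of the two quantitative inputs, and the write-up is short.
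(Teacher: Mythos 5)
Your proposal is correct and is exactly the paper's argument: the paper derives Corollary \ref{ZelBazaField} simply ``by combining'' Theorem \ref{bZ1} (to get a polynomial identity of $\{|A|,f,K\}$-bounded degree for $L$) with Theorem \ref{Z1} applied to the $A$-invariant generating set. Your extra remark about the bookkeeping (that the bound from Theorem \ref{Z1} depends on data that are themselves $\{|A|,f,K\}$-bounded) is a reasonable point that the paper leaves implicit.
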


We will need a similar result for Lie rings. As usual, $\gamma_i(L)$ denotes the $i$th term of the lower central series of $L$.
\begin{proposition}\label{ZelBazaRing}
Let $L$ be a Lie ring and $A$ a finite group of automorphisms of $L$ such that $C_L(A)$ satisfies the polynomial identity $f\equiv 0$. Further, assume that $L$ is generated by an $A$-invariant set of $m$ elements such that every commutator in the generators is ad-nilpotent of index at most $n$. Then there exist positive integers $e$ and $c$, depending only on $|A|, f, m$ and $n$, such that  $e\gamma_c(L)=0$.
\end{proposition}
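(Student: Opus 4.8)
The plan is to derive the proposition from the field case, Corollary~\ref{ZelBazaField}, by base change: one applies it to suitable quotients and localisations of $L$ that happen to be Lie algebras over a field, and then reassembles the resulting nilpotency data into a statement about $\gamma_c(L)$.

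First I would pass to $L\otimes_{\mathbf Z}\mathbf Q$. In characteristic $0$ the averaging operator $\frac1{|A|}\sum_{a\in A}a$ is available, so $C_{L\otimes\mathbf Q}(A)=C_L(A)\otimes\mathbf Q$, and in particular it satisfies $f\equiv0$. The images of the given $m$ generators form an $A$-invariant generating set in which every commutator is still ad-nilpotent of index at most $n$, so Corollary~\ref{ZelBazaField} shows that $L\otimes\mathbf Q$ is nilpotent of some $\{|A|,f,m,n\}$-bounded class $c_0$. Consequently $\gamma_{c_0+1}(L)$ is a torsion Lie ring; moreover a nilpotent Lie algebra of class $c_0$ generated by $m$ elements has dimension at most $m+m^2+\dots+m^{c_0}$, so $L$ has $\{m,c_0\}$-bounded torsion-free rank. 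The same argument applied to $L/pL$ for a prime $p\nmid|A|$ (Maschke's theorem is available, $p$ being prime to $|A|$) shows that each $L/pL$ is nilpotent of bounded class, and, being $m$-generated and nilpotent, of bounded $\mathbf F_p$-dimension.

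The first technical point is that the last two bounds should be taken uniformly in $p$. This is legitimate because the class bound furnished by the quantitative Zelmanov theorem in characteristic $p$ agrees with the one in characteristic $0$ once $p$ exceeds an $\{f,m,n\}$-bounded threshold, and the finitely many remaining primes $p\nmid|A|$ below it contribute only a further maximum. Thus there are $\{|A|,f,m,n\}$-bounded numbers $c\ge c_0$ and $D$ with $\gamma_c(L)\subseteq pL$ and $\dim_{\mathbf F_p}(L/pL)\le D$ for every prime $p\nmid|A|$. To upgrade ``$\gamma_c(L)$ torsion'' to ``the $p$-component of $\gamma_c(L)$ has bounded exponent'' for $p\nmid|A|$, I would pass to the $p$-adic completion $\widehat L$ and apply Corollary~\ref{ZelBazaField} to the characteristic-zero algebra $\widehat L\otimes_{\mathbf Z_p}\mathbf Q_p$, whose fixed-point subalgebra is again governed by $C_L(A)$ (coprime averaging converges $p$-adically, $|A|$ being invertible in $\mathbf Z_p$) and hence satisfies $f\equiv0$; this forces $\gamma_c(\widehat L)$ to be $\mathbf Z_p$-torsion, and a Baire-category argument inside the complete module $\widehat L$ then bounds its $p$-exponent by a constant independent of $p$. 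Combined with $\dim_{\mathbf F_p}(L/pL)\le D$ and $\gamma_c(L)\subseteq pL$, this controls the $\pi(|A|)'$-part of $\gamma_c(L)$.

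The main obstacle, and the point where the Lie ring situation really diverges from the Lie algebra one, is the behaviour at the finitely many (and bounded) primes $q$ dividing $|A|$: there the $A$-action is not coprime, Maschke's theorem and Theorem~\ref{blz} are unavailable, and the fixed points of a reduction of $L$ modulo $q$ need no longer be PI, so one cannot reduce to a field as above. I would circumvent this by passing to the quotient $\bar L=L/T$ by the $\pi(|A|)$-torsion ideal $T$: the cohomological discrepancy $H^1(A,T)$ is annihilated by $|A|$ and therefore disappears in the $\pi(|A|)$-torsion-free ring $\bar L$, so $C_{\bar L}(A)$ still satisfies a polynomial identity depending only on $f$ and $|A|$, and the analysis above applies to $\bar L$, showing that $\bar L$ is genuinely nilpotent of $\{|A|,f,m,n\}$-bounded class $c_1$. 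Hence $\gamma_{c_1+1}(L)\subseteq T$, and it remains to bound the exponent of this $\pi(|A|)$-group, which is where the bounded-generation and ad-nilpotence hypotheses, together with the polynomial identity on $C_L(A)$, must be used directly. Taking $c$ to be the maximum of the indices produced above and $e$ the product of the finitely many prime-power exponents obtained, one gets the required $\{|A|,f,m,n\}$-bounded pair with $e\gamma_c(L)=0$.
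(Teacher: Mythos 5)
Your reduction-mod-$p$ strategy has three genuine gaps, each at a point that carries the whole weight of the proposition. First, the uniformity in $p$: the bounds in Theorem \ref{Z1} and Corollary \ref{ZelBazaField} depend on the field $K$, and your assertion that the characteristic-$p$ class bound coincides with the characteristic-$0$ bound once $p$ exceeds an $\{f,m,n\}$-bounded threshold is not contained in any of the quoted results; the natural way to prove it is to pass to a relatively free Lie ring over $\mathbb{Z}$, get nilpotency over $\mathbb{Q}$, and clear denominators --- which is essentially the statement you are trying to prove, so as written this step is circular. Second, the $p$-adic step cannot work as described: $\widehat L\otimes_{\mathbb{Z}_p}\mathbb{Q}_p$ is generated by the images of the $m$ generators only topologically, not as an abstract Lie algebra, so Corollary \ref{ZelBazaField} does not apply to it; $\gamma_c(\widehat L)$ need not be closed; and, decisively, a Baire category argument only produces \emph{some} exponent for the one fixed module at hand --- it is inherently non-quantitative and cannot yield a bound that is independent of $p$, let alone one depending only on $|A|,f,m,n$ uniformly over all rings $L$ satisfying the hypotheses, which is what the proposition requires. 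Third, the primes dividing $|A|$: your claim that $\bar L=L/T$ is ``genuinely nilpotent'' does not follow from your own analysis (at best it would have $\gamma_c$ of bounded exponent, again the thing being proved), the $H^1$ remark is faulty as stated since the cokernel of $C_L(A)\to C_{\bar L}(A)$ is a quotient of a subgroup of $\bar L$ and quotients of torsion-free groups need not be torsion-free (this particular point is repairable via multilinearization of $f$), and the final, crucial step --- bounding the exponent of the $\pi(|A|)$-torsion part of $\gamma_{c_1+1}(L)$ --- is explicitly left unproved (``must be used directly''). That is precisely the case your field-reduction machinery cannot reach, because Theorem \ref{blz} is unavailable when the characteristic divides $|A|$.

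For comparison, the paper avoids all prime-by-prime analysis with a single universal-object argument: since the generating set is $A$-invariant, $A$ acts by permutations on the free generators $x_1,\dots,x_m$; one forms the free Lie algebra $L_{\mathbb Q}$ and its $\mathbb{Z}$-form $L_{\mathbb Z}$, lets $I\leq L_{\mathbb Z}$ be the ideal generated by the values of $f$ on $C_{L_{\mathbb Z}}(A)$ and by the elements $[u,{}_nv]$ with $v$ a commutator in the generators, and applies Corollary \ref{ZelBazaField} to $L_{\mathbb Q}/\mathbb{Q}I$ to get nilpotency of bounded class $c-1$. Writing each weight-$c$ commutator in the generators as a rational combination of elements of $I$ and taking $e$ to be the least common multiple of the finitely many denominators gives $e\gamma_c(L_{\mathbb Z})\leq I$, and the specialization homomorphism $L_{\mathbb Z}\to L$ kills $I$ (by the ad-nilpotency hypothesis and the identity on $C_L(A)$), yielding $e\gamma_c(L)=0$ with $e,c$ depending only on $|A|,f,m,n$. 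If you want to salvage your approach, the denominator-clearing idea is the missing ingredient for gap one and simultaneously makes the completions, Baire category, and the separate treatment of primes dividing $|A|$ unnecessary.
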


\begin{proof} Since the set of generators of $L$ is $A$-invariant, every automorphism $\alpha\in A$ induces a permutation of the generators.
Let $L_{\mathbb Q}$ denote the free Lie algebra on free generators $x_1,\dots,x_m$ over the field ${\mathbb Q}$ of rational numbers and  $L_{\mathbb Z}$ the Lie subring of $L_{\mathbb Q}$ generated by $x_1,\dots,x_m$. For each $\alpha\in A$ let $\phi_{\alpha}$ be the automorphism of $L_{\mathbb Q}$ (or of $L_{\mathbb Z}$) such that $x_i^{\phi_\alpha}=x_{\pi(i)}$ where $\pi$ is the permutation that $\alpha$ induces on the generators of $L$. The mapping that takes $\alpha\in A$ to $\phi_{\alpha}$ induces a natural action of $A$ on $L_{\mathbb Q}$ (or on $L_{\mathbb Z}$) by automorphisms.

Let $I$ be the ideal of $L_{\mathbb Z}$ generated by all values of $f$ on elements of $C_{L_{\mathbb Z}}(A)$ and by all elements of the form $[u,{}_nv]$, where $u$ ranges through $L_{\mathbb Z}$ and $v$ ranges through the set of all commutators in the generators  $x_1,\dots,x_m$. By Corollary \ref{ZelBazaField} $L_{\mathbb Q}/{\mathbb Q}I$ is nilpotent of $\{|A|,f,m,n\}$-bounded class, say $c-1$.

If $y_1,\dots,y_{c}$ are not necessarily distinct elements of the set $x_1,\dots,x_m$, it follows that the commutator $[y_1,\dots,y_{c}]$ lies in ${\mathbb Q}I$ and therefore it can be written in the form $$[y_1,\dots,y_{c}]=\sum_iq_if_iw_i,$$ where $q_i$ are rational numbers, $f_i$ are either elements of the form $[u,{}_nv]$ as above or values of $f$ in elements of $C_{L_{\mathbb Z}}(A)$ and $f_iw_i$ are elements of $L_{\mathbb Z}$ obtained by multiplying (several times) $f_i$ with products in the free generators $x_1,\dots,x_m$. 

Let $e$ be the least common multiple of the denominators of the coefficients $q_i$ taken over all possible choices of $y_1,\dots,y_{c}$ in $\{x_1,\dots,x_m\}$. Then $e[y_1,\dots,y_{c}]\in I$ for any choice of $y_1,\dots,y_c$. Therefore $e\gamma_c(L_{\mathbb Z})\leq I$.

We remark that there exists a natural homomorphism of $L_{\mathbb Z}/I$ onto $L$ under which each automorphism $\phi_{\alpha}$ induces the automorphism $\alpha$ of $L$. Thus, $e\gamma_c(L)=0$, as required.
\end{proof}
We will now quote a useful lemma from \cite{KS}.

\begin{lemma}\label{lemmapskh}
Let $L$ be a Lie ring and $H$ a subring of $L$ generated by $m$ elements $h_1,\ldots,h_m$ such that all commutators in $h_i$ are ad-nilpotent in $L$ of index at most $n$. If $H$ is nilpotent of class $c$, then for some $\{c,m,n\}$-bounded number $u$ we have $[L,\underbrace{H,\ldots,H}_u]=0$.
\end{lemma}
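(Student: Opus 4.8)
The plan is to reduce the lemma to a statement about the associative ring $R$ of operators on $L$ generated by the adjoint maps of $h_1,\dots,h_m$, and then to bound the nilpotency index of $R$ by means of the Poincar\'e--Birkhoff--Witt theorem over $\mathbb Z$. (Informally, the content of the lemma is a quantitative Engel-type assertion: a finitely generated nilpotent Lie ring whose basic commutators are ad-nilpotent acts nilpotently, with bounded index, on any Lie ring containing it.)

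\emph{Reduction.} Since $H$ is nilpotent of class $c$ and generated by $m$ elements, its additive group is spanned by the set $B$ of basic commutators of weight at most $c$ in $h_1,\dots,h_m$, and the numbers $N:=|B|$ and $W:=\sum_{b\in B}\operatorname{wt}(b)$ are $\{c,m\}$-bounded. Each $b\in B$ is a commutator in the $h_i$, so the operator $\operatorname{ad}_L(b)$ (I use the convention $\operatorname{ad}_L(a)\colon x\mapsto[a,x]$, so that $a\mapsto\operatorname{ad}_L(a)$ is a Lie ring homomorphism from $L$ into $\operatorname{End}(L)$) differs only by a sign from the map $x\mapsto[x,b]$, which is nilpotent of index at most $n$ by hypothesis; hence $\operatorname{ad}_L(b)^{\,n}=0$. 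Using additivity of the Lie bracket in each variable, an easy induction on $u$ shows
\[
[L,\underbrace{H,\dots,H}_{u}]=\sum_{b_1,\dots,b_u\in B}\operatorname{ad}_L(b_u)\cdots\operatorname{ad}_L(b_1)(L),
\]
so it suffices to find a $\{c,m,n\}$-bounded $u$ for which every composition $\operatorname{ad}_L(b_u)\cdots\operatorname{ad}_L(b_1)$ with the $b_i\in B$ is zero. Put $T_i:=\operatorname{ad}_L(h_i)$. By the Jacobi identity $\operatorname{ad}_L([x,y])=[\operatorname{ad}_L x,\operatorname{ad}_L y]$ in $\operatorname{End}(L)$, so every $\operatorname{ad}_L(b)$ with $b\in B$ is the corresponding commutator of weight $\operatorname{wt}(b)\le c$ in the $T_i$, and therefore expands into a $\mathbb Z$-linear combination of products $T_{i_1}\cdots T_{i_{\operatorname{wt}(b)}}$. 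Consequently $\operatorname{ad}_L(b_u)\cdots\operatorname{ad}_L(b_1)$ is a $\mathbb Z$-linear combination of products of the $T_i$ of length at least $u$, and it is enough to show that the associative subring $R\subseteq\operatorname{End}(L)$ generated by $T_1,\dots,T_m$ satisfies $R^{\,s}=0$ for some $\{c,m,n\}$-bounded $s$; then $u=s$ works.

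\emph{Bounding $R$.} Let $\Phi\colon\mathbb Z\langle X_1,\dots,X_m\rangle\to R$ be the ring epimorphism sending $X_i$ to $T_i$. Identify the free associative ring with the universal enveloping ring of the free Lie ring on $X_1,\dots,X_m$; since that free Lie ring is a free $\mathbb Z$-module possessing a homogeneous basis of basic commutators $C_1,C_2,\dots$, the Poincar\'e--Birkhoff--Witt theorem over $\mathbb Z$ lets us rewrite any monomial $X_{i_1}\cdots X_{i_s}$ as a $\mathbb Z$-linear combination of ordered products $\prod_j C_j^{\,a_j}$ with $\sum_j a_j\operatorname{wt}(C_j)=s$. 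Now $\Phi(C_j)$ is the corresponding commutator in the $T_i$. It vanishes whenever $\operatorname{wt}(C_j)>c$, because the Lie subring of $\operatorname{End}(L)$ generated by the $T_i$ equals $\operatorname{ad}_L(H)$, a homomorphic image of $H$, hence nilpotent of class at most $c$; and $\Phi(C_j)^{\,n}=0$ whenever $\operatorname{wt}(C_j)\le c$, since then $\Phi(C_j)=\operatorname{ad}_L(b)$ for some $b\in B$. Thus an ordered product $\prod_j C_j^{\,a_j}$ survives under $\Phi$ only if every $C_j$ occurring in it has weight at most $c$ and multiplicity at most $n-1$, which forces $s\le(n-1)W$. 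Hence $\Phi$ annihilates every monomial of length exceeding $(n-1)W$, i.e. $R^{\,(n-1)W+1}=0$, and we may take $u=(n-1)W+1$.

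\emph{Main obstacle.} I expect the only delicate point to be the integral enveloping-ring/PBW set-up: one needs the free Lie ring to be free as an abelian group with a homogeneous basis of basic commutators, so that the PBW theorem is available over $\mathbb Z$ and the degree bookkeeping in the last step (every surviving ordered monomial has length $\le(n-1)W$) is legitimate. Everything else is the routine combinatorics of basic commutators together with the multilinearity of the Lie bracket; alternatively, the required estimate can be obtained by carrying out the collection process directly inside $R$, which amounts to the same argument.
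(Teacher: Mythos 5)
Your proof is correct: the reduction to the associative subring of $\operatorname{End}(L)$ generated by the $\mathrm{ad}(h_i)$, the use of the homogeneous basic-commutator basis of the free Lie ring over $\mathbb Z$ together with PBW, and the resulting bound $u=(n-1)W+1$ all go through, and this collection-type argument is essentially the standard proof of the lemma. Note that the present paper does not prove this statement at all but quotes it from \cite{KS}, where the argument runs along the same lines as yours, so your write-up simply supplies the omitted proof.
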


Recall that the identity $$\sum_{\sigma\in S_n}[y,x_{\sigma(1)},\ldots, x_{\sigma(n)}]\equiv 0$$ is called linearized $n$-Engel identity. In general, Theorem \ref{Z1992} cannot be extended to the case where $L$ is just a Lie ring (rather than Lie algebra over a field). However such an extension does hold in the particular case where the polynomial identity $f\equiv 0$ is a linearized Engel identity.

\begin{theorem}\label{Znilpotenteanel} Let $f$ be a Lie polynomial of degree $n$ all of whose coefficients equal 1 or -1. Let $L$ be a Lie ring generated by finitely many elements $a_1,\ldots,a_m$ such that all commutators in the generators are ad-nilpotent of index at most $n$. Assume that $L$ satisfies the identity $f\equiv 0$. Then $L$ is nilpotent of $\{f,m,n\}$-bounded class.
\end{theorem}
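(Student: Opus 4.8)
The plan is to reduce the Lie-ring statement to the Lie-algebra statement (Theorem \ref{Z1}) by tensoring with fields of various characteristics, using Proposition \ref{ZelBazaRing} as the engine and then exploiting the special shape of a linearized Engel identity to control what happens at the "bad" primes. First I would apply Proposition \ref{ZelBazaRing} with the trivial group $A$ (so $C_L(A)=L$ and the identity $f\equiv0$ holds on all of $C_L(A)$): this yields positive integers $e$ and $c$, depending only on $f$, $m$ and $n$, such that $e\gamma_c(L)=0$. Thus $\gamma_c(L)$ is annihilated by the fixed integer $e$, and the only primes that can occur in the additive torsion of $\gamma_c(L)$ are the (boundedly many) primes dividing $e$. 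It remains to handle each such prime $p$ separately and to handle the torsion-free part.

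For a prime $p$ dividing $e$, I would consider the ideal $pL$ and pass to $L/pL$, a Lie algebra over the field $\mathbb{F}_p$. This quotient is generated by the images of $a_1,\dots,a_m$, each commutator in the generators is still ad-nilpotent of index at most $n$, and $L/pL$ still satisfies $f\equiv 0$. Now the point of requiring $f$ to be a \emph{linearized} Engel-type identity with coefficients $\pm1$ enters: over $\mathbb{F}_p$ the identity $f\equiv 0$ forces $L/pL$ to satisfy the ordinary (non-linearized) $n$-Engel identity — more precisely, one feeds repeated copies of a single element into $f$ and uses that $f$ is multilinear to deduce $[y,{}_n x]\equiv 0$ up to a factor that is a unit mod $p$ when $p$ does not divide the relevant multinomial coefficient; when $p$ is small one instead argues directly that a Lie algebra over $\mathbb{F}_p$ satisfying a multilinear identity of degree $n$ is PI in the sense needed for Theorem \ref{Z1}. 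In any case $L/pL$ is a PI Lie algebra over a field, finitely generated, with all commutators in the generators ad-nilpotent of bounded index, so by Theorem \ref{Z1} it is nilpotent of $\{f,p,m,n\}$-bounded class; since $p\mid e$ and $e$ is $\{f,m,n\}$-bounded, this class is in fact $\{f,m,n\}$-bounded. Hence there is a single integer $d$, depending only on $f$, $m$, $n$, with $\gamma_d(L)\subseteq pL$ for every prime $p$ dividing $e$.

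Combining the two pieces: on the one hand $e\gamma_c(L)=0$, and on the other $\gamma_d(L)\subseteq pL$ for each $p\mid e$. Iterating the second inclusion, $\gamma_{1+t(d-1)}(L)\subseteq p^tL$ for all $t$, and choosing $t$ with $p^t \nmid e$ for every prime $p\mid e$ gives $\gamma_{D}(L)\subseteq \bigcap_{p\mid e} p^{t}L$ for a $\{f,m,n\}$-bounded $D$; since $e\gamma_c(L)=0$, an element of $\gamma_{\max(c,D)}(L)$ lies in $p^tL$ and is killed by $e$, and as $\gcd(p^t, e/p^{v_p(e)})=$ handled prime by prime via the Chinese remainder / coprimality argument one concludes $\gamma_{c'}(L)=0$ for a $\{f,m,n\}$-bounded $c'$. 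I expect the main obstacle to be precisely the small-prime analysis in the previous paragraph: showing that passing to characteristic $p$ with $p\mid e$ still lands one in the hypotheses of Theorem \ref{Z1} with a \emph{bound independent of the particular small prime}. This is where the hypothesis that $f$ has all coefficients $\pm1$ and degree exactly $n$ — rather than an arbitrary polynomial identity — is essential, since it guarantees the identity survives reduction mod $p$ nontrivially and produces ad-nilpotent behavior of index bounded in terms of $n$ alone; the rest of the argument is bookkeeping with the lower central series and the fixed exponent $e$.
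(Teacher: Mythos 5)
Your overall route is genuinely different from the paper's and is essentially viable. Both proofs start the same way, applying Proposition \ref{ZelBazaRing} with $A=1$ to get $e\gamma_c(L)=0$. After that the paper sets $M=eL$, observes that $M$ is nilpotent of bounded class, and runs an induction on the derived length of $M$: the base case $eL=0$ is handled by primary decomposition plus Theorem \ref{Z1} applied to $L/pL$, and the inductive step uses Lemma \ref{lemmapskh} to pull nilpotency of $L/D$ back to $L$ across the abelian ideal $D$. You instead go straight at every prime $p\mid e$: since the coefficients of $f$ are $\pm1$, $f$ remains a nontrivial polynomial identity of $L/pL$ over $\mathbb{F}_p$, so Theorem \ref{Z1} gives $\gamma_d(L)\subseteq pL$ with $d$ bounded (the dependence on $\mathbb{F}_p$ is harmless because $p\le e$ and $e$ is $\{f,m,n\}$-bounded), and you then combine these congruences with $e\gamma_c(L)=0$ by an arithmetic argument, avoiding both the derived-length induction and Lemma \ref{lemmapskh}. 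Incidentally, your middle paragraph's worry about small primes and multinomial coefficients is unnecessary: you never need to extract an honest $n$-Engel identity from $f$, because $f$ itself is the PI fed to Theorem \ref{Z1}, and its coefficients $\pm1$ do not vanish mod any $p$.

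The one place where your write-up is not a proof as it stands is the final combination step. From ``$x\in\gamma_{\max(c,D)}(L)$ lies in $p^tL$ and $ex=0$'' nothing follows: writing $x=p^ty$ gives no control on $y$, so $(e/p^{v_p(e)})x=p^{t-v_p(e)}ey$ need not vanish, and no Chinese remainder argument applies to membership in $p^tL$ alone. The correct bookkeeping lands you in $p^t\gamma_c(L)$ rather than $p^tL$: from $\gamma_{t(d-1)+1}(L)\subseteq p^tL$ (your iteration, which is fine since $[p^tL,{}_{d-1}L]\subseteq p^t\gamma_d(L)$) bracket with $c-1$ further copies of $L$ to get $\gamma_{t(d-1)+c}(L)\subseteq [p^tL,{}_{c-1}L]= p^t\gamma_c(L)$. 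Taking $t=v_p(e)$, the right-hand side is annihilated by $e/p^{v_p(e)}$ because $e\gamma_c(L)=0$; doing this for every prime divisor of $e$ and using $\gcd_{p\mid e}\bigl(e/p^{v_p(e)}\bigr)=1$ (or noting the case where $e$ is a prime power is immediate) yields $\gamma_N(L)=0$ for an $\{f,m,n\}$-bounded $N$, since each $v_p(e)\le\log_2 e$ is bounded. With that one-line correction your argument is complete and arguably shorter than the paper's.
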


The deduction of Theorem \ref{Znilpotenteanel} from Theorem \ref{Z1} uses just standard arguments. For the reader's convenience we sketch out a proof.
\begin{proof} By Proposition \ref{ZelBazaRing} applied with $A=1$, there exist positive integers $e$ and $c$, depending only on $f,m$ and $n$, such that  $e\gamma_c(L)=0$. We set $M=eL$ and notice that $M$ is a nilpotent ideal of $L$ whose nilpotency class is bounded in terms of $f,m$ and $n$. Let $d$ be the derived length of $M$ and use induction on $d$. Suppose first that $M=0$. Then $L$ is a direct sum of its primary components $L_p$, where $p$ ranges through prime divisors of $e$. It is sufficient to show that the nilpotency class of each $L_p$ is bounded in terms of $f, m$ and $n$ and so without loss of generality we can assume that $e=p^k$ for a prime $p$. We note that $k$ here is $\{f,m,n\}$-bounded. The quotient $L/pL$ can be regarded as a Lie algebra over the field with $p$ elements and so it is nilpotent of bounded class by Theorem \ref{Z1}. Hence, there exists an $\{f,m,n\}$-bounded number $c_1$ such that $\gamma_{c_1}(L)\leq pL$. This implies that $\gamma_{kc_1}(L)\leq p^kL=0$ and we are done.

Assume now that $M\neq0$ and let $D$ be the last nontrivial term of the derived series of $M$. By induction $\bar{L}=L/D$ is nilpotent of bounded class. The ring $\bar{L}$ naturally acts on $D$ by derivations and Lemma \ref{lemmapskh} shows that for some $\{f,m,n\}$-bounded number $u$ we have $[D,\underbrace{\bar{L},\ldots,\bar{L}}_u]=0$. It follows that $L$ is nilpotent of $\{f,m,n\}$-bounded class, as required.
\end{proof}
\section{On associated Lie rings} 

There are several well-known ways to associate a Lie ring to a group (see \cite{Hu2,Khu1,aaaa}). For the reader's convenience we will briefly describe the construction that we are using in the present paper.

Let $G$ be a group. A series of subgroups $$G=G_1\geq G_2\geq\dots\eqno{(*)}$$ is called an $N$-series if it satisfies $[G_i,G_j]\leq G_{i+j}$ for all $i,j$. Obviously any $N$-series is central, i.e. $G_i/G_{i+1}\leq Z(G/G_{i+1})$ for any $i$. Given an $N$-series $(*)$, let $L^*(G)$ be the direct sum of the abelian groups $L_i^*=G_i/G_{i+1}$, written additively. Commutation in $G$ induces a binary operation $[,]$ in $L^*(G)$. For homogeneous elements $xG_{i+1}\in L_i^*,yG_{j+1}\in L_j^*$ the operation is defined by $$[xG_{i+1},yG_{j+1}]=[x,y]G_{i+j+1}\in L_{i+j}^*$$ and extended to arbitrary elements of $L^*(G)$ by linearity. It is easy to check that the operation is well-defined and that $L^*(G)$ with the operations $+$ and $[,]$ is a Lie ring. If all quotients $G_i/G_{i+1}$ of an $N$-series $(*)$ have prime exponent $p$ then $L^*(G)$ can be viewed as a Lie algebra over the field with $p$ elements. Any automorphism of $G$ in the natural way induces an automorphism of $L^*(G)$. If $G$ is finite and $\alpha$ is an automorphism of $G$ such that $(|\alpha|,|G|)=1$, then the subring of fixed points of $\alpha$ in $L^*(G)$ is isomorphic with the Lie ring associated to the group $C_G(\alpha)$ via the series formed by intersections of $C_G(\alpha)$ with the series $(*)$.

 In the case where the series $(*)$ is just the lower central series of $G$ we write $L(G)$ for the associated Lie ring. In the case where the series $(*)$ is the $p$-dimension central series of $G$ we write $L_p(G)$ for the subalgebra generated by the first homogeneous component $G_1/G_2$ in the associated Lie algebra over the field with $p$ elements.

\begin{proposition}\label{pppp}
Let $G$ be an $m$-generated group satisfying the hypothesis of Theorem \ref{main}. Let $p$ be a prime number that divides the order of $G$. Then
\begin{enumerate}
\item $L_p(G)$ is nilpotent of $\{p,q,m,n\}$-bounded class.
\item There exists positive integers $e,c$ depending only on $m, n$ and $q$, such that $e\gamma_c(L(G))=0$.
\end{enumerate}
\end{proposition}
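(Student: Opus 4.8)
The plan is to obtain both assertions by applying, respectively, Corollary~\ref{ZelBazaField} to the Lie algebra $L_p(G)$ and Proposition~\ref{ZelBazaRing} to the Lie ring $L(G)$; almost all of the work goes into verifying the hypotheses of those two results, and the genuinely hard point will be establishing the ad-nilpotency of commutators in a suitable bounded generating set.

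I would begin with the $A$-action. Since the action of $A$ on $G$ is coprime and $p$ divides $|G|$, the prime $p$ is different from $q$, so $A$ acts on the $\mathbb{F}_p$-algebra $L_p(G)$, and on the Lie ring $L(G)$, with $|A|=q^2$ prime to $p$. Let $B_1,\dots,B_{q+1}$ be the subgroups of $A$ of order $q$. For each $k$ we have $C_G(B_k)=C_G(a)$ whenever $a$ generates $B_k$, so by hypothesis every element of $C_G(B_k)$ is $n$-Engel in $G$; consequently $C_G(A)=C_G(B_i)\cap C_G(B_j)$ (for $i\neq j$) is an $n$-Engel group. Using the identification of fixed points under a coprime action, $C_{L(G)}(A)$, and correspondingly $C_{L_p(G)}(A)$, is contained in the Lie ring (resp.\ Lie algebra) associated with $C_G(A)$ via the induced series, and the latter satisfies the polynomial identity $[x,{}_n y]\equiv 0$ because $C_G(A)$ is $n$-Engel; hence $C_{L(G)}(A)$ and $C_{L_p(G)}(A)$ satisfy this identity, which has degree $n+1$. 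I would also record here an elementary point: if $w\in G$ is $n$-Engel, then the image $\bar w$ of $w$ in any of the associated Lie rings above is ad-nilpotent of index at most $n$, because the relation $[\bar z,{}_n\bar w]=0$ holds for homogeneous $\bar z$ (being inherited from $[z,{}_n w]=1$ in $G$) and then holds for all $\bar z$ since the map $\bar z\mapsto[\bar z,\bar w]$ is additive.

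Next I would build a bounded $A$-invariant generating set. Write $L$ for either $L(G)$ or $L_p(G)$; then $L$ is generated by its first homogeneous component $V$, namely $G/\gamma_2(G)$ in the case of $L(G)$ and the corresponding component in the case of $L_p(G)$, and since $G$ is $m$-generated, $V$ is generated by $m$ elements. By the standard property of coprime actions of a noncyclic abelian group on a finite abelian group, $V=\sum_{k=1}^{q+1}C_V(B_k)$, and $C_V(B_k)$ is exactly the image of $C_G(B_k)$. Picking in each $C_V(B_k)$ a generating set with at most $m$ elements and forming the union of the $A$-orbits of all these elements, we get an $A$-invariant generating set $Y$ of $L$ of $\{m,q\}$-bounded size which is still contained in $\bigcup_k C_V(B_k)$, so by the previous point every element of $Y$ is ad-nilpotent of index at most $n$. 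The remaining task is to bound the ad-nilpotency index of an arbitrary commutator $c$ in the elements of $Y$. Such a $c$ is homogeneous, lying in the $r$-th component $\gamma_r(G)/\gamma_{r+1}(G)$ (resp.\ its analogue for $L_p(G)$) for some $r$; applying coprimeness once more to that component we write $c=c_1+\dots+c_{q+1}$ with $c_k\in C_{L}(B_k)$, and since the $B_k$-fixed points in degree $r$ form the image of $\gamma_r(G)\cap C_G(B_k)\subseteq C_G(B_k)$, each $c_k$ is again the image of an $n$-Engel element, hence ad-nilpotent of index at most $n$.

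It remains to deduce that the sum $c=c_1+\dots+c_{q+1}$ is ad-nilpotent of $\{q,n\}$-bounded index, and this is where the main obstacle lies: a sum of $q+1$ ad-nilpotent elements is not ad-nilpotent in general, so one must genuinely exploit that the $k$-th summand is centralized by $B_k$. I expect this to be carried out by decomposing the homogeneous components of $L$ under $A$ into character subspaces (legitimate after adjoining enough roots of unity, since the order of $A$ is prime to the torsion of $L$), reducing to pieces lying in a single $C_L(B_k)$, and then invoking Lemma~\ref{lemmapskh} to control the subring generated by $c_1,\dots,c_{q+1}$; this is the counterpart, in the Engel situation, of the corresponding step in the proof of Theorem~\ref{q2}. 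Granting this, assertion~(1) follows by applying Corollary~\ref{ZelBazaField} to $L_p(G)$ over $K=\mathbb{F}_p$: the hypotheses hold with the identity $[x,{}_n y]\equiv 0$, with the generating set $Y$ of $\{m,q\}$-bounded size, and with ad-nilpotency index $\{q,n\}$-bounded, so the nilpotency class of $L_p(G)$ is $\{p,q,m,n\}$-bounded, the prime $p$ entering only through the field $K$. Assertion~(2) follows by applying Proposition~\ref{ZelBazaRing} to the Lie ring $L(G)$ with the same data: it yields positive integers $e$ and $c$ with $e\gamma_c(L(G))=0$ which depend only on $|A|=q^2$, on the identity $[x,{}_n y]\equiv 0$, and on the $\{m,q\}$- and $\{q,n\}$-bounded parameters of $Y$, hence only on $m$, $n$ and $q$.
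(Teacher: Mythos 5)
Your overall frame (Baer/coprime generalities, the linearized fixed-point identity, then Corollary~\ref{ZelBazaField} for $L_p(G)$ and Proposition~\ref{ZelBazaRing} for $L(G)$) is the paper's frame, but the step you yourself flag as ``the main obstacle'' is exactly the heart of the paper's proof, and you neither carry it out nor point at a route that would work. Having written a commutator $c$ in your generators as $c=c_1+\dots+c_{q+1}$ with $c_k\in C_L(B_k)$, you propose to ``invoke Lemma~\ref{lemmapskh} to control the subring generated by $c_1,\dots,c_{q+1}$.'' That lemma requires two inputs you cannot verify: that $\langle c_1,\dots,c_{q+1}\rangle$ is nilpotent of bounded class, and that every commutator in $c_1,\dots,c_{q+1}$ is ad-nilpotent of bounded index. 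Both fail a priori precisely because the $c_k$ lie in \emph{different} centralizers: a mixed commutator such as $[c_1,c_2]$ need not lie in any $C_L(B_k)$, so it is not the image of an $n$-Engel group element, and the mixed subring satisfies no Engel-type identity you have established. So the decomposition of $c$ into fixed-point summands leads nowhere as described.

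The paper avoids ever summing across different centralizers by changing the generating set instead of decomposing commutators: it passes to $\overline{L}=L\otimes\mathbb{Z}[\omega]$ and generates $\overline{L}$ by a bounded $A$-invariant set of common eigenvectors for $A$. Since $A$ is noncyclic, every eigenvector --- and every commutator of eigenvectors, which is again an eigenvector --- lies in a single $C_{\overline{L}}(A_i)$, i.e.\ in some homogeneous piece $\overline{L}_{ij}$. The only sums one must then handle are $y=x_0+\omega x_1+\dots+\omega^{q-2}x_{q-2}$ with \emph{all} $x_s$ in the same $L_{ij}$, hence all inside one centralizer where the linearized $n$-Engel identity holds; there the subring $H=\langle x_0,\omega x_1,\dots\rangle$ is nilpotent of bounded class by Theorem~\ref{Znilpotenteanel} (the Lie-\emph{ring} form of Zelmanov's theorem, which your proposal never invokes but which is indispensable for part (2), where no field is available), and only then does Lemma~\ref{lemmapskh} give the bounded ad-nilpotency index. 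Two smaller points: your claim that the Lie ring associated with $C_G(A)$ satisfies $[x,{}_n y]\equiv 0$ is not justified for non-homogeneous $y$ --- what passes to the associated Lie ring is the \emph{linearized} $n$-Engel identity, which is what the paper uses as the PI $f$; and for part (1) the boundedness of the class should not depend on $p$ through anything but the field, which your setup does respect.
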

The proofs of the statements (1) and (2) are similar. We will give a detailed proof of the second statement only. The proof of (1) can be obtained simply by replacing every appeal to Proposition \ref{ZelBazaRing} in the proof of (2) by an appeal to Corollary \ref{ZelBazaField}. An important observation that is used in the proof is that if $G$ is a finite $p$-group that can be generated by $d$ elements, then we can choose $d$ generators from any subset that generates $G$. This follows from the well-known Burnside Basis Theorem \cite{Huppert}.
\begin{proof} Since $G$ is generated by the centralizers $C_G(a)$, where $a\in A^\#$, and since all elements in the centralizers $C_G(a)$ are Engel, the Baer Theorem  [\cite{Huppert}, III,6.14] implies that the group $G$ is nilpotent. Therefore $G$ is a direct product of its Sylow subgroups. Hence, without loss of generality we can assume that $G$ is a $p$-group.
Let $\gamma_j=\gamma_j(G)$, $L_j=\gamma_j/\gamma_{j+1}$ and $L=L(G)=\oplus_{j\geq 1}L_j$ be the Lie ring associated with the group $G$. Let $A_1,\ldots,A_{q+1}$ be the distinct maximal subgroups of $A$. Set $L_{ij}=C_{L_j}(A_i)$. We know that any $A$-invariant subgroup is generated by the centralizers of $A_i$. Therefore for any $j$ we have 
$$L_j=\sum_{i=1}^{q+1}L_{ij}.$$
Further, for any $l\in L_{ij}$ there exists $x\in \gamma_{j}\cap C_G(A_i)$ such that $l=x\gamma_{j+1}$. Since  $x$ is $n$-Engel in $G$, it follows that $l$ is ad-nilpotent of index at most $n$. Thus,

\begin{equation}\label{adnilpotent1}
\mbox{any element in}\  L_{ij}\ \mbox{is ad-nilpotent in $L$ of index at most}\ n.
\end{equation} 
Since $G$ is generated by $m$ elements, the additive group $L_1$ is generated by $m$ elements. It follows that the Lie ring $L$ is generated by at most $m$ ad-nilpotent elements, each from $L_{i1}$ for some $i$. 

Let $\omega$ be a primitive $q$th root of unity and consider the tensor product $\overline{L}=L\otimes\mathbb{Z}[\omega]$. Set $\overline{L}_j=L_j\otimes\mathbb{Z}[\omega]$ for $j=1,2,\dots$. We regard $\overline{L}$ as a Lie ring and so $\overline{L}=\left<\overline{L}_1\right>$. Since the additive subgroup $L_1$ is generated by $m$ elements, it follows that the additive subgroup $\overline{L}_1$ is generated by $(q-1)m$ elements. We also remark that there is a natural embedding of the ring $L$ into the ring $\overline{L}$.

The group $A$ acts on $\overline{L}$ in the natural way and we have $\overline{L}_{ij}=C_{\overline{L}_j}(A_i)$ where $\overline{L}_{ij}=L_{ij}\otimes\mathbb{Z}[\omega]$. We will now establish the following claim.

\begin{equation}\label{Lijbarra}
\mbox{Any element in}\  \overline{L}_{ij}\ \mbox{is ad-nilpotent in $\overline{L}$ with $\{n,q\}$-bounded index.}
\end{equation} 

Indeed choose $y\in \overline{L}_{ij}$ and write $$y=x_0+\omega x_1+\omega^2x_2+\cdots+\omega^{q-2}x_{q-2}$$ for suitable $x_s\in L_{ij}$. In view of (\ref{adnilpotent1}) it is easy to see that each of the summands $\omega^sx_s$ is ad-nilpotent in $\overline{L}$ of index at most $n$. 

Let $H=\left<x_0,\omega x_1, \omega^2x_2,\ldots, \omega^{q-2}x_{q-2}\right>$ be the subring of $\overline{L}$ generated by $x_0,\omega x_1,\omega^2x_2,\ldots,\omega^{q-2}x_{q-2}$. We wish to show that $H$ is nilpotent of $\{n,q\}$-bounded class. 

Note that $H\leq C_{\overline{L}}(A_i)$ since $\omega^sx_s\in C_{\overline{L}}(A_i)$ for all $s$. A commutator of weight $t$ in the elements $x_0,\omega x_1,\omega^2x_2,\ldots,\omega^{q-2}x_{q-2}$ has the form $\omega^tx$, for some $x\in L_{ij_0},$ where $j_0=tj$. By (\ref{adnilpotent1}), the element $x$ is ad-nilpotent of index at most $n$ in $L$ and so we deduce that $\omega^tx$ is ad-nilpotent of index at most $n$ in $\overline{L}$.

Since $C_G(A_i)$ is $n$-Engel, we conclude that $C_L(A_i)$ satisfies the linearized $n$-Engel identity. This identity is multilinear and so it is also satisfied by $C_L(A_i)\otimes \mathbb{Z}[\omega]=C_{\overline{L}}(A_i)$. Since $H\leq C_{\overline{L}}(A_i)$, it follows that the identity is satisfied by $H$. Hence, by Theorem \ref{Znilpotenteanel}, $H$ is nilpotent of $\{n,q\}$-bounded class.

By Lemma \ref{lemmapskh}, there exists a positive integer $v$, depending only on $n$ and $q$, such that $[\overline{L},_v H]=0$. Since $y\in H$, we conclude that $y$ is ad-nilpotent in $\overline{L}$ with $\{n,q\}$-bounded index. This proves Claim (2). 

An element $x\in\overline{L}$ will be called a common ``eigenvector'' for $A$ if for any $a\in A$ there exists $s$ such that $x^a=\omega^sx$. Since $(|A|,|G|)=1$, the additive group of the Lie ring $\overline{L}$ is generated by common eigenvectors for $A$ (see for example \cite[Lemma 4.1.1]{Khu1}). 

We have already remarked that the additive subgroup $\overline{L}_1$ is generated by $(q-1)m$ elements. It follows that the Lie ring $\overline{L}$ is generated by at most $(q-1)m$ common eigenvectors for $A$. Certainly, the Lie ring $\overline{L}$ is generated by an $A$-invariant set of at most $(q-1)qm$ common eigenvectors for $A$.

Since $A$ is noncyclic, any common eingenvector for $A$ is contained in the centralizer $C_{\overline{L}}(A_i)$ for some $i$. Further, any commutator in common eigenvectors is again a common eigenvector. Therefore if $l_1,\ldots,l_{s}\in \overline{L}_1$ are common eigenvectors for $A$ generating $\overline{L}$, then any commutator in these generators belongs to some $\overline{L}_{ij}$ and therefore, by Claim (2), is ad-nilpotent of $\{n,q\}$-bounded index.

As we have mentioned earlier, for any $i$ the subalgebra $C_{\overline{L}}(A_i)$ satisfies the linearized $n$-Engel identity. 
Thus, by Proposition \ref{ZelBazaRing}, there exist positive integers $e, c$ depending only on $m,n$ and $q$ such that $e\gamma_c(\overline{L})=0$. Since $L$ embeds into $\overline{L}$, we also have $e\gamma_c(L)=0$. The proof is complete. 
\end{proof}

\section{Proof of the main theorem}

A well-known theorem of Gruenberg says that a soluble group generated by finitely many Engel elements is nilpotent (see \cite[12.3.3]{Rob}). We will require a quantitative version of this theorem.
\begin{lemma}\label{gru}
Let $G$ be a group generated by $m$ $n$-Engel elements and suppose that $G$ is soluble with derived length $d$. Then $G$ is nilpotent of $\{d,m,n\}$-bounded class. 
\end{lemma}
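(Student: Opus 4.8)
The plan is to prove Lemma~\ref{gru} by induction on the derived length $d$. The base case $d=1$ is immediate: an abelian group generated by finitely many elements, each of which is $n$-Engel, is nilpotent of class $1$ (indeed, it is trivially $1$-Engel), so any bound of the form $1$ works. For the inductive step, suppose $d\geq 2$ and let $A=G^{(d-1)}$ be the last nontrivial term of the derived series, so that $A$ is abelian and $\bar G=G/A$ is soluble of derived length $d-1$. The images $\bar g_1,\dots,\bar g_m$ of the given generators still generate $\bar G$ and are still $n$-Engel (the Engel condition passes to quotients), so by induction $\bar G$ is nilpotent of $\{d,m,n\}$-bounded class, say $c_0$.

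Next I would exploit the action of $\bar G$ on the abelian normal subgroup $A$ by conjugation, viewing $A$ as a $\mathbb Z[\bar G]$-module (or, more concretely, just keeping track of commutators $[A,\bar G,\dots,\bar G]$). The key point is a commutator-collection / three-subgroup style argument: since $\bar G=\langle \bar g_1,\dots,\bar g_m\rangle$ is nilpotent of class $c_0$, every iterated commutator $[a,h_1,\dots,h_t]$ with $a\in A$ and $h_j\in G$ can be expressed, up to reordering modulo deeper commutators, in terms of iterated commutators $[a,{}_{k_1}g_{i_1},{}_{k_2}g_{i_2},\dots]$ in which each generator is applied consecutively. Because each $g_i$ is $n$-Engel, we have $[a,{}_n g_i]=1$ for every $i$ and every $a\in A$, so each ``block'' of consecutive applications of a fixed generator has length at most $n-1$. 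Combined with the fact that $\bar G$ has bounded class $c_0$ — which limits how many distinct generators can meaningfully occur in a nonzero commutator and how reorderings proliferate — one obtains an $\{d,m,n\}$-bounded number $w$ with $[A,\underbrace{G,\dots,G}_{w}]=1$. Together with $\gamma_{c_0+1}(G)\leq A$, this yields $\gamma_{c_0+w+1}(G)=1$, completing the induction.

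The main obstacle is making the collection argument in the second paragraph precise and quantitative. One has to argue that the subgroup $[A,\bar G]$ (with $\bar G$ acting) is generated, as a normal subgroup, by commutators of the restricted form with consecutive generator-blocks, and then bound the nilpotency of this action using only that each generator acts with ``$n$-Engel-type'' nilpotency on $A$ and that $\bar G$ itself is of bounded class. This is essentially a Lie-ring/associated-graded style estimate: passing to the (abelian) module $A$ and the nilpotent group $\bar G$, the relevant bound is a purely combinatorial fact about how long a left-normed commutator in the generators can be before it must vanish, given that (i) no generator appears $n$ times in a row and (ii) the underlying nilpotent group relations let one bring equal generators together at the cost of bounded-depth correction terms. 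I would expect that either a direct induction on $c_0$ handles this, or one can invoke a known quantitative form of the fact that a nilpotent group acting on an abelian group so that each of finitely many generators acts nilpotently must act nilpotently as a whole, with the bound depending only on the class, the number of generators, and the individual nilpotency indices — which is exactly the shape of Lemma~\ref{lemmapskh} transported to the group setting.
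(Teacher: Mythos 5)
Your reduction (induction on $d$, pass to $\bar G=G/A$ with $A=G^{(d-1)}$, then bound the action of $G$ on $A$) is structurally fine, but the heart of the matter — the claim that $[A,\underbrace{G,\dots,G}_{w}]=1$ for some $\{d,m,n\}$-bounded $w$ — is exactly what you have not proved, and you yourself flag it as "the main obstacle''. The collection argument you sketch breaks down at its critical point: when you reorder factors to create blocks of a single generator, the correction terms involve $([g_i,g_j]-1)$, $([g_i,g_j,g_k]-1)$, etc., acting on $A$, and the hypothesis gives no bound on the nilpotency index of these actions — being $n$-Engel is assumed only for the generators (and hence, automatically, their conjugates), not for commutators in them. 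This is precisely why the paper's Lemma~\ref{lemmapskh}, which you hope to "transport to the group setting'', assumes that \emph{every commutator} in the generators is ad-nilpotent of bounded index; with only the generators assumed nilpotent the statement does not transfer, and the "known quantitative fact'' you invoke (a nilpotent group whose finitely many generators act nilpotently on an abelian group acts nilpotently with a bound depending only on the class, the number of generators and the indices) is not a citable result — establishing it (say by induction on the class $c_0$, first showing that commutators such as $[g_i,g_j]=g_i^{-1}g_i^{g_j}$ act unipotently of bounded index because the two factors commute modulo deeper terms) would itself be a nontrivial piece of work, comparable in length to the whole lemma. Also, your parenthetical reason "bounded class limits how many distinct generators can meaningfully occur in a nonzero commutator'' is not correct: commutators $[a,h_1,\dots,h_t]$ with $a\in A$ can be nontrivial for arbitrarily large $t$ regardless of the class of $\bar G$; the class only controls commutators among the $g_i$ modulo $A$.

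It is worth knowing that the paper avoids all of this with a two-line compactness argument: assuming the bound fails, take counterexamples $G_i$ of class at least $i$, form the subgroup $D$ of their Cartesian product generated by the $m$ "diagonal'' tuples of generators; $D$ is soluble of derived length at most $d$ and generated by $m$ elements that are $n$-Engel (the condition is checked coordinatewise, which is where the fixed $n$ is essential), so Gruenberg's theorem makes $D$ nilpotent of some class $c$, and every $G_i$, being a quotient of $D$, has class at most $c$ — a contradiction. If you want to salvage your approach, either carry out the quantitative module argument in full (essentially re-proving a group analogue of Lemma~\ref{lemmapskh} under weaker hypotheses), or simply replace your second paragraph by this compactness reduction to Gruenberg.
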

\begin{proof} Suppose that the lemma is false. Then for each positive integer  $i$ we can choose a group $G_i$ satisfying the hypothesis of the lemma and having nilpotency class at least $i$. In each group $G_i$ we fix generators $g_{i1},g_{i2},\ldots, g_{im}$ that are $n$-Engel elements.

In the Cartesian product of the groups $G_i$ consider the subgroup $D$ generated by $m$ elements
$$g_1=(g_{11},g_{21},\ldots),\\
\ldots,\\
g_m=(g_{1m},g_{2m}\ldots).$$ 
Thus, $D$ is a soluble group generated by $m$ elements that are $n$-Engel. By Gruenberg's theorem $D$ is nilpotent, say of class $c$. We remark that each of the groups $G_i$ is isomorphic to a quotient $D$. Hence each of the groups $G_i$ is nilpotent of class at most $c$. A contradiction.
\end{proof}

Given a group $H$ and an integer $i$, in what follows we write $H^i$ for the subgroup of $H$ generated by all $i$th powers and $\gamma_i(H)$ for the $i$th term of the lower central series of $H$. 
We will also require the concept of powerful $p$-groups. These were introduced by Lubotzky and Mann in \cite{luma}: a finite $p$-group $G$ is powerful if and only if $G^p\geq [G,G]$ for $p\ne 2$ (or $G^4\geq [G,G]$ for $p=2$). The reader can consult books \cite{GA} or \cite{Khu2} for more information on these groups.

Powerful $p$-groups have many nice properties. In particular, if $P$ is a powerful $p$-group, the subgroups $\gamma_i(P)$ and $P^i$ are also powerful. Moreover, if $n_1,\dots,n_s$ are positive integers, it follows by repeated applications of \cite[Props. 1.6 and 4.1.6]{luma} that $$[P^{n_1},\dots,P^{n_s}]\leq\gamma_s(P)^{n_1\cdots n_s}.$$ If $P$ is generated by $d$ elements, then any subgroup of $P$ can be generated by at most $d$ elements and $P$ is a product of $d$ cyclic subgroups (see Theorem 1.11 and Theorem 1.12 in \cite{luma}).

We are now ready to prove Theorem \ref{main}.
\begin{proof}[Proof of Theorem \ref{main}]
Let $G$ be as in Theorem \ref{main}. We wish to show that $G$ is $k$-Engel for some $\{n,q\}$-bounded number $k$. Since $G$ is generated by the centralizers $C_G(a)$ with $a\in A^\#$ and since all elements in the centralizers $C_G(a)$ are Engel, the Baer Theorem implies that the group $G$ is nilpotent. Choose arbitrarily $x,y\in G$. It is sufficient to show that the subgroup $\langle x,y\rangle$ is nilpotent of $\{n,q\}$-bounded class. Without loss of generality we can assume that no proper $A$-invariant subgroup of $G$ contains both $x$ and $y$. In other words, we will assume that $G=\langle x^A,y^A\rangle$. Therefore the group $G$ can be generated by 2$q^2$ elements. Every Sylow subgroup of $G$ satisfies these hypotheses and therefore without loss of generality we can assume that $G$ is a $p$-group for some prime $p\neq q$. Using the fact that $G$ is generated by the centralizers $C_G(a)$ with $a\in A^\#$ and combining it with the Burnside Basis Theorem we can assume further that $G$ is generated by at most 2$q^2$ elements which are $n$-Engel. In view of Lemma \ref{gru} it is now sufficient to show that the derived length of $G$ is $\{n,q\}$-bounded. Let $L=L(G)$. By Proposition \ref{pppp} (2), there exist positive integers $e, c$ that depend only on $n$ and $q$ such that $e\gamma_c(L)=0$. If $p$ is not a divisor of $e$, we have $\gamma_c(L)=0$ and so the group $G$ is nilpotent of class at most $c-1$. In that case the proof is complete and so we assume that $p$ is a divisor of $e$. Consider first the particular case where $G$ is a powerful $p$-group. Set $R=G^e$ and assume that $R\neq1$.

We have $$[R,R]\leq[G,G]^{e^2}\leq G^{pe^2}=R^{pe}.$$ Let $L_1=L(R)$. By Proposition \ref{pppp} (2), $e\gamma_c(L_1)=0$. The fact that $e\gamma_c(L_1)=0$ means that $\gamma_c(R)^e\leq\gamma_{c+1}(R)$. Taking into account that $R$ is powerful, we write $$\gamma_c(R)^e\leq\gamma_{c+1}(R)=[R',{}_{c-1}R]\leq[R^{pe},{}_{c-1}R]\leq\gamma_c(R)^{pe}.$$ Hence, $\gamma_c(R)^e=1$. Since $\gamma_c(R)$ is powerful and generated by at most 2$q^2$ elements, we conclude that $\gamma_c(R)$ is a product of at most 2$q^2$ cyclic subgroups. Hence the order of $\gamma_c(R)$ is at most $e^{2q^2}$. It follows that the derived length of $R$ is $\{n,q\}$-bounded. Recall that $G$ is a powerful $p$-group and $R=G^e$. It follows that the derived length of $G$ is $\{n,q\}$-bounded. Thus, we proved the desired result in the case where $G$ is a powerful $p$-group. 

Let us now drop the assumption that $G$ is powerful. By Proposition \ref{pppp} (1) the algebra $L_p(G)$ is nilpotent with bounded nilpotency class. Proposition 1 of \cite{KS} now tells us that $G$ has a characteristic powerful subgroup of bounded index. We already know that the derived length of the powerful subgroup is $\{n,q\}$-bounded. Hence, the derived length of $G$ is $\{n,q\}$-bounded as well. The proof is now complete.
\end{proof}

\end{document}